\newtheoremstyle{case}
{} %Aboveskip 
{} %Below skip
{} %Body font e.g.\mdseries,\bfseries,\scshape,\itshape
{} %Indent
{} %Head font e.g.\bfseries,\scshape,\itshape
{:} %Punctuation afer theorem header
{ } %Space after theorem header
{} %Heading
\theoremstyle{case}
\newtheorem{case}{Case}
\newtheorem{subcase}{Case}
\numberwithin{subcase}{case}
\newtheoremstyle{steps}{}{}{}{}{}{.}{ }{}
\theoremstyle{steps}
\newtheorem{steps}{Step}
\def\co{\colon\thinspace}
\title{Maps of simplicial spectra whose realizations are cofibrations.}
\date{September 2017}
\author{G. Angelini-Knoll}
\author{A. Salch}
\begin{document}
\begin{abstract}
Given a map of simplicial topological spaces, mild conditions on degeneracies and the levelwise maps imply that the geometric realization of the simplicial map is a cofibration. These conditions are not formal consequences of model category theory, but depend on properties of spaces, and similar results have not been available for any model for the stable homotopy category of spectra. In this paper, we prove such results for symmetric spectra. Consequently, we get a set of conditions which ensure that the geometric realization of a map of simplicial symmetric spectra is a cofibration. These conditions are ``user-friendly'' in that they are simple, often easily checked, and do not require computation of latching objects. 
\end{abstract}
\maketitle
\tableofcontents
\section{Introduction.}

Let $f_{\bullet}\co X_{\bullet}\rightarrow Y_{\bullet}$ be a morphism of simplicial objects in some category of topological interest (e.g. spaces, spectra, equivariant spectra, $\dots$ ). It is often useful to know whether the induced map of geometric realizations
\begin{equation}\label{geom realiz map} \left| f_{\bullet}\right| \co \left| X_{\bullet}\right| \rightarrow \left| Y_{\bullet}\right|\end{equation}
is a cofibration. %\footnote{This problem arose for the authors in the process of constructing the ``THH-May spectral sequence'' of~\cite{thhmay}, but the problem is a very general one of basic interest. The paper~\cite{thhmay} makes essential use of the results of this paper.}. 

It is well-known that, given a model category $\mathcal{C}$, there exists a model structure (called the {\em Reedy model structure} due to its origin in C. Reedy's thesis~\cite{reedy}) on the category of simplicial objects in $\mathcal{C}$ such that, if $f_{\bullet}$ is a Reedy cofibration, then the map~\ref{geom realiz map} is a cofibration in the original model category $\mathcal{C}$. The map $f_{\bullet}$ is a {\em Reedy cofibration} if, for each nonnegative integer $n$, the latching comparison map
\[ X_n \coprod_{\tilde{L}_nX_{\bullet}} \tilde{L}_nY_{\bullet} \rightarrow Y_n\]
is a cofibration in $\mathcal{C}$. Here $\tilde{L}_n$ is a certain colimit called the {\em latching object} construction, whose definition we recall in Definition~\ref{latch}.

From the definition of a Reedy cofibration one can see that, in practical situations, it is often difficult to check that a given map $f_{\bullet}$ is a Reedy cofibration. The purpose of this paper is to give a straightforward, practical, often easily-checked set of conditions on a simplicial map of spectra $f_{\bullet}$ which ensure that it is a Reedy cofibration, and hence that the map~\ref{geom realiz map} is a cofibration of spectra. 

The main approach is to try to ``import" some classical results from unstable homotopy theory into the setting of symmetric spectra. In pointed topological spaces, it is typically quite easy to check that a simplicial map $f_{\bullet}\co X_{\bullet} \rightarrow Y_{\bullet}$ is a Reedy cofibration: 
\begin{enumerate}
\item{}\label{s1} if the degeneracy maps in $X_{\bullet}$ and $Y_{\bullet}$ are all closed cofibrations (i.e., $X_{\bullet}$ and $Y_{\bullet}$ are ``good'' simplicial spaces, in the sense of~\cite{MR0353298} ), then an easy application of Lillig's cofibration union theorem ~\cite{MR0334193} shows that $X_{\bullet}$ and $Y_{\bullet}$ are each Reedy cofibrant. 
\item{}\label{s2} Then one can show that, if $X_{\bullet}$ and $Y_{\bullet}$ are each Reedy cofibrant and each map $f_n \co X_n \rightarrow Y_n$ is a closed cofibration, then $f_{\bullet}$ is a Reedy cofibration.
\end{enumerate}
The key observation that makes this proof work is that the latching space $\tilde{L}_nX_{\bullet}$ is simply the union of the images of the degeneracy maps in $X_{\bullet}$, and so the natural map $\tilde{L}_nX_{\bullet}\rightarrow X_n$ is trivially seen to be a monomorphism, and using Lillig's theorem, a cofibration. As a consequence of this result, in the setting of simplicial topological spaces one only needs the degeneracy maps in $X_{\bullet}$ and $Y_{\bullet}$ to be closed cofibrations and for each map $f_n \co X_n \rightarrow Y_n$ to be a closed cofibration in order for the map~\ref{geom realiz map} to be a cofibration. See~\cite{MR0353298} and~\cite{MR0420610} for these facts from classical homotopy theory. 

Now one wants to be able to do something similar in stable homotopy theory, i.e., to replace spaces with (some model for) spectra. Problems immediately arise: for example, it is no longer necessarily true that the latching object $\tilde{L}_nX_{\bullet}$ is the ``union of the images of the degeneracy maps'' when $X_{\bullet}$ is a simplicial spectrum, so the classical proofs of steps~\eqref{s1} and~\eqref{s2}, above, do not ``work'' in spectra. If we are willing to work with the levelwise model structure on symmetric spectra in simplicial sets of topological spaces, then the problems that arise are easily surmountable (see Step~\ref{step1} of the proof of Theorem \ref{theorem1} and the proof of Theorem \ref{reedy levelwise cofibrancy lemma}), but the levelwise model structure does not have all the desired properties of a model for the homotopy category of spectra. It is therefore desirable to prove an analogue of the theorem for flat cofibrations (originally called $S$-cofibrations in Shipley \cite{MR2066511} and Hovey-Shipley-Smith \cite{MR1695653} where they were introduced). 

In this paper, we prove an analogue of this theorem for flat cofibrations in the setting of symmetric spectra in pointed simplicial sets, of~\cite{MR1695653}; see~\cite{schwedebook} for an excellent introduction and reference for symmetric spectra (though~\cite{schwedebook} is unpublished so we refer to published references instead whenever possible). %cite someone else besides schwede? qx

As noted above, we make the distinction between ``levelwise cofibrations,'' ``flat cofibrations,''  ``positive levelwise cofibrations,'' and  ``positive flat cofibrations'' because the category of symmetric spectra has more than one useful notion of cofibration: see Definition~\ref{def of model structures} for a review of their definitions. The brief version is that levelwise cofibrations have a simple definition which is often easy to verify for a given map, while flat cofibrations give rise to a better-behaved model category (symmetric spectra with flat cofibrations admit a {\em symmetric monoidal} stable model structure \cite{MR1695653}, while with levelwise cofibrations the model structure fails to be monoidal \cite{schwedebook}), but the defining condition for a map to be a flat cofibration is significantly more difficult to verify.
Every flat cofibration is a levelwise cofibration, but the converse implication does not hold (for example $\bar{S}$ defined in Definition \ref{spectral latch} is not flat cofibrant because $\nu_2(\bar{S})$ is not a monomorphism with notation from Definition \ref{spectral latch}). The positive flat cofibrations require that one additional axiom be satisfied (see Definition~\ref{def of model structures}), but have the advantage that $E_{\infty}$-ring spectra are more easily modeled in the positive flat model structure on symmetric spectra (of topological spaces or simplicial sets); see~\cite{MR1695653} \cite{MR2066511} or~\cite{schwedebook} for more details.
%cite someone else besides schwede? qx

If we combine Theorems~\ref{theorem1} and~\ref{reedy flat cofibration thm} together, we get a result which has some ``teeth'':
\begin{theorem}\label{bigthm}
Let $f_{\bullet}\co X_{\bullet} \rightarrow Y_{\bullet}$ be a map of simplicial symmetric spectra in $\sSet_*$. Suppose that all of the following conditions are satisfied:
\begin{itemize}
\item the symmetric spectra $X_n$ and $Y_n$ are flat-cofibrant for all $n$,
\item each of the degeneracy maps $s_i\co X_n \rightarrow X_{n+1}$ and $s_i\co Y_n \rightarrow Y_{n+1}$ are levelwise cofibrations,
\item and $f_n\co X_n \rightarrow Y_n$ is a flat cofibration for all $n$.
\end{itemize}
Then $f_{\bullet}$ is a Reedy flat cofibration. Consequently the map of symmetric spectra
\[ \left| f_{\bullet}\right| \co \left| X_{\bullet}\right| \rightarrow \left| Y_{\bullet}\right|\]
is a flat cofibration.

If we furthermore assume that each $X_n$ and each $Y_n$ is {\em positive} flat-cofibrant and that each $f_{n}$ is a {\em positive} flat cofibration, then
$f_{\bullet}$ is a Reedy positive flat cofibration, and consequently the
map of symmetric spectra
$\left| f_{\bullet}\right|$
is a positive flat cofibration.
\end{theorem}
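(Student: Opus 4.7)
The proof strategy is to combine the two earlier theorems~\ref{theorem1} and~\ref{reedy flat cofibration thm} in a two-step approach that mirrors steps~\eqref{s1} and~\eqref{s2} of the classical argument for simplicial spaces recalled in the introduction. The first step establishes Reedy flat cofibrancy of the simplicial symmetric spectra $X_\bullet$ and $Y_\bullet$ individually, and the second step upgrades this, together with the levelwise hypothesis on $f_\bullet$, to Reedy flat cofibrancy of the map itself.

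In detail, first I would apply Theorem~\ref{theorem1} separately to $X_\bullet$ and to $Y_\bullet$. Its hypotheses --- namely, that each simplicial degree is flat cofibrant and that every degeneracy map is a levelwise cofibration --- are exactly the first two bullet points of the hypothesis applied to the source and target. This yields that each of the latching maps $\tilde{L}_n X_\bullet \rightarrow X_n$ and $\tilde{L}_n Y_\bullet \rightarrow Y_n$ is a flat cofibration, i.e., $X_\bullet$ and $Y_\bullet$ are Reedy flat cofibrant. Second, I would apply Theorem~\ref{reedy flat cofibration thm} to $f_\bullet$. Having established Reedy flat cofibrancy of source and target, together with the hypothesis that each $f_n$ is a flat cofibration, this theorem yields that the latching comparison map
\[ X_n \coprod_{\tilde{L}_n X_\bullet} \tilde{L}_n Y_\bullet \longrightarrow Y_n \]
is a flat cofibration for every $n$, which is exactly the assertion that $f_\bullet$ is a Reedy flat cofibration.

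The conclusion that $\left| f_\bullet \right|$ is itself a flat cofibration is then formal: geometric realization is the left adjoint of a Quillen adjunction from the Reedy flat model structure on simplicial symmetric spectra to the flat model structure on symmetric spectra, and in particular it carries Reedy flat cofibrations to flat cofibrations. The positive flat case is handled by the identical two-step argument, invoking the positive analogues of Theorems~\ref{theorem1} and~\ref{reedy flat cofibration thm} with the strengthened hypotheses on the objects $X_n, Y_n$ and on the maps $f_n$; note that the hypothesis on degeneracies need not be strengthened, because they appear only in the input to step one.

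The main obstacle, which this combined proof offloads entirely onto Theorem~\ref{theorem1}, is the first step: establishing Reedy flat cofibrancy of a simplicial symmetric spectrum from the assumption of merely levelwise (not flat) cofibrant degeneracies. As the introduction observes, the latching object $\tilde{L}_n X_\bullet$ in symmetric spectra is \emph{not} the union of the images of the degeneracy maps, so the classical Lillig-union argument used for spaces does not transport directly; a genuinely spectrum-theoretic replacement, presumably working first through the levelwise model structure (as hinted at by the reference to Theorem~\ref{reedy levelwise cofibrancy lemma}) and then upgrading to the flat model structure, must replace it. In the present theorem that work is invoked as a black box, so the proof of the combined statement itself reduces to the two citations and the Quillen-functoriality of realization.
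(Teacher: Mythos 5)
Your proposal is correct and follows essentially the same route as the paper, which likewise deduces Theorem~\ref{bigthm} by applying Theorem~\ref{theorem1} to $X_{\bullet}$ and $Y_{\bullet}$ separately (the first two bullet points being exactly the definition of ``good'') and then feeding the resulting Reedy flat-cofibrancy, together with the pointwise flat cofibration hypothesis on $f_{\bullet}$, into Theorem~\ref{reedy flat cofibration thm}, with the positive case handled by the positive addenda of both theorems. Your remarks about realization being left Quillen for the Reedy structure and about the degeneracy hypothesis not needing strengthening in the positive case are both consistent with the paper.
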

We end the paper with an application of Theorem \ref{bigthm} to a practical problem. This problem arose for the authors in the process of constructing the ``THH-May spectral sequence'' of~\cite{thhmay}. 

It is a pleasure to thank E.~Riehl for helping us with our questions when we were trying to find out whether the results in this paper were already known.

\begin{conventions}\label{running conventions}
In this paper, we consider symmetric spectra in pointed simplicial sets and this category is denoted $\Sp_{\sSet_*}$. 
When referencing a model structure on $\sSet_*$, we will use the Quillen model structure throughout and refer to the cofibrations as simply as ``cofibrations." Recall that the cofibrations in this model structure are exactly the monomorphisms of pointed simplicial sets.  

This category of symmetric spectra has several important notions of cofibration, and four of them are used in this paper: the {\em levelwise cofibrations}, the {\em flat cofibrations}, the {\em positive levelwise cofibrations,} and the {\em positive flat cofibrations.} As a consequence, we have four ``Reedy'' notions of cofibration in the category of simplicial symmetric spectra. To keep them distinct, we will speak of ``Reedy levelwise cofibrations'' as opposed to ``Reedy flat cofibrations,'' and ``Reedy levelwise-cofibrant simplicial objects'' as opposed to ``Reedy flat-cofibrant simplicial objects,'' and so on.

When $X$ is a symmetric spectrum in $\mathcal{C}$ and $n$ is a nonnegative integer, we will write $X(n)$ for the level $n$ object of $X$. When $X_{\bullet}$ is a simplicial object, we will write $X_n$ for the $n$-simplices object of $X_{\bullet}$. So, for example, given a simplicial symmetric spectrum $X_{\bullet}$, we write $X(n)_{\bullet}$ for the simplicial object of $\mathcal{C}$ whose $m$-simplices object is $X(n)_m$. The symbols $X(n)_m$ and $X_m(n)$ have the same meaning, and we will use them interchangeably.

The word ``levelwise'' is used in two very different ways when speaking of maps between simplicial symmetric spectra, and for the sake of clarity, in this paper we consistently use the word ``pointwise'' instead of ``levelwise'' for one of these two notions. %Our explanation is as follows:
As an example, consider a definition from~\cite{schwedebook}: %cite someone else besides schwede? qx
given a map of symmetric spectra $f\co X\rightarrow Y$, one says that $f$ is a {\em levelwise cofibration} if 
each of the component maps $f(n)\co X(n) \rightarrow Y(n)$ is a cofibration in $\sSet_*$. (See Definition~\ref{def of model structures}, below, for this definition and the related notion of a ``flat cofibration.'')
To distinguish this usage of the word ``levelwise'' from how the word ``levelwise'' is used when speaking of a map between simplicial objects, 
whenever we have a map $f_{\bullet}\co X_{\bullet} \rightarrow Y_{\bullet}$ of {\em simplicial} symmetric spectra,
we will say that $f_{\bullet}$ is a {\em pointwise flat cofibration}
if the map of symmetric spectra $f_n\co X_n \rightarrow Y_n$ is a flat cofibration for each $n$. 

When working with simplicial symmetric spectra, we will need to notationally distinguish between latching objects of simplicial objects, and latching objects of symmetric spectra; these notions are related but distinct (compare Definition~\ref{latch} and Definition \ref{spectral latch}). 
We will write $\tilde{L}_n(X_{\bullet})$ for the $n$th latching object of a simplicial object $X_{\bullet}$, and we write $L_n(X)$ for the $n$th latching object of a symmetric spectrum $X$ (see Definition \ref{spectral latch} or Construction~I.5.29 of~\cite{schwedebook} for this second notion).
\begin{comment}
Given a finitely complete and cocomplete category $\mathcal{A}$, we will write $B\co \prod_AC$ for the categorical colimit in $\mathcal{A}$ (i.e., the pushout) of the diagram 
\[ \xymatrix{A \ar[r] \ar[d] &C\\ B & }.\] 
Dually, we will write $B\prod_DC$ for the categorical limit in $\mathcal{A}$ (i.e., the pullback) of the diagram 
\[ \xymatrix{& C\ar[d]  \\ B \ar[r] & D }.\]
\end{comment}
\end{conventions}

\section{Review of the relevant model structures.}
The definition of the latching object of a simplicial object dates back to Reedy's thesis~\cite{reedy}, but there are a number of different (equivalent) versions of the construction.
The following version, which is convenient for what we do in this paper, appears as Remark~VII.1.8 in~\cite{MR1711612}. Also, see Proposition 15.2.6 in \cite{MR1944041} for a proof that this definition is equivalent to the other common definitions of latching objects in the Reedy model structure on functors from $\Delta^{\op}$ to a pointed model category $\mathcal{A}$. 
\begin{definition} \label{latch} Let $X_{\bullet}$ be a simplicial object in a pointed model category $\mathcal{A}$. By convention, let $\tilde{L}_0X_{\bullet}$ be the zero object in $\mathcal{A}$. The latching object $\tilde{L}_1X_{\bullet}$ is $X_0$ and the first latching map $\tilde{L}_1X_{\bullet}\rightarrow X_1$ is the degeneracy map $s_0\co X_0\rightarrow X_1$. 

For $n>1$ define \[ \tilde{L}_n X_{\bullet} := 
\text{coeq} 
\left \{ 
\xymatrix{ 
\coprod_{0\le i<j\le n-1} X_{n-2} \{i,j\}\ar@<1ex>[r]^(.55){S^{\prime}} \ar@<-1ex>[r]_(.55){S^{\prime \prime}} & \coprod_{k=0}^{n-1} X_{n-1} \{k\}  
} \right \} \] 
with $S^{\prime}$ and $S^{\prime \prime}$ defined as follows:
for a given pair $(i,j)$ with $i<j$ we define maps
\[ \xymatrix{ X_{n-2} \{i,j\} \ar[r]^{s_i}& X_{n-1}\{j\} \ar[r]^(.4){\iota_j} & \coprod_{k=0}^{n-1} X_{n-2}\{k\}}\] 
and 
\[ \xymatrix{ X_{n-2} \{i,j\} \ar[r]^{s_{j-1}}& X_{n-1}\{i\}\ar[r]^(.4){\iota_i} & \coprod_{k=0}^{n-1} X_{n-2}\{k\}}\]
where $s_i,s_{j-1}$ are the degeneracy maps in our simplicial object and $\iota_k$ is the inclusion into the $k$-th summand. We then define $S^{\prime}$ using the first collection of maps and the universal property of the coproduct and we define $S^{\prime\prime}$ using the second collection of maps and the universal property of the coproduct. The symbols $\{j\}$ and $\{i,j\}$ are simply formal symbols used to index the coproduct summands. We have a map $\coprod_{k=0}^{n-1} X_{n-1} \{k\} \lra X_n$ given by the coproduct of the degeneracies and this produces a natural comparison map 
\[ \tilde{\nu}_n(X_{\bullet})\co \tilde{L}_nX_{\bullet}\lra X_n \] 
by universal property of the coequalizer and the simplicial identity $s_js_i=s_is_{j+1}$. (See~\cite{MR1711612} for more details on the natural transformation $\tilde{\nu}_n(-)$.) 
\end{definition}

\begin{theorem}[ Theorem 15.3.4 in \cite{MR1944041}] There is a model structure on simplicial objects in a model category $\mathcal{A}$ called the \emph{Reedy model structure}, where the cofibrations are the maps $X_{\bullet}\rightarrow Y_{\bullet}$ such that, for each $n$, the map
\[ X_n \coprod_{\tilde{L}_nX_{\bullet}} \tilde{L}_nY_{\bullet} \rightarrow Y_n\]
is a cofibration in $\mathcal{A}$, the weak equivalences are the pointwise weak equivalences, and the fibrations are the Reedy fibrations \cite[Definition 15.3.3]{MR1944041}.
\end{theorem}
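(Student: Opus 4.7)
The plan is to verify the model category axioms by induction on simplicial degree, exploiting the Reedy structure on $\Delta^{\op}$. Concretely, $\Delta^{\op}$ decomposes as the union of a direct subcategory (opposites of surjections in $\Delta$, acting via the degeneracy maps) and an inverse subcategory (opposites of injections, acting via face maps), with each morphism factoring uniquely as an inverse followed by a direct map, and with $[n]$ of degree $n$. This unique factorization is what enables one to build simplicial objects inductively one degree at a time.

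The ``soft'' axioms are routine. The 2-out-of-3 and retract closure properties for weak equivalences, together with the inclusion of isomorphisms in all three classes, follow immediately from the corresponding properties in $\mathcal{A}$, because weak equivalences are defined pointwise. Closure of Reedy cofibrations and Reedy fibrations under retracts uses that pushouts and pullbacks preserve retracts, so that the latching (resp.\ matching) comparison map of a retract is itself a retract of the original.

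The substantive axioms are factorization and lifting, both carried out by induction on simplicial degree. To factor $f_{\bullet}\co X_{\bullet} \to Y_{\bullet}$ as a Reedy cofibration followed by a Reedy trivial fibration (the dual factorization is symmetric), suppose that $Z_k$ and the relevant structure maps have been constructed for $k<n$. This data already determines $\tilde{L}_nZ_{\bullet}$ as in Definition~\ref{latch}, together with a dually constructed matching object $\tilde{M}_nZ_{\bullet}$, and hence a canonical comparison map
\[ X_n \coprod_{\tilde{L}_nX_{\bullet}} \tilde{L}_nZ_{\bullet} \lra Y_n \times_{\tilde{M}_nY_{\bullet}} \tilde{M}_nZ_{\bullet}. \]
Factoring this map in $\mathcal{A}$ as a cofibration followed by a trivial fibration produces $Z_n$; the required face maps out of $Z_n$ and degeneracy maps into $Z_n$ are extracted using the universal properties of the matching and latching objects. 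Functoriality is inherited directly from the factorization in $\mathcal{A}$. The main obstacle is showing that $Z_{\bullet}$ really is a simplicial object, i.e., verifying the simplicial identities involving the new degree-$n$ maps; this ultimately reduces to the uniqueness of the inverse/direct factorization in $\Delta^{\op}$ together with the explicit coequalizer presentation of $\tilde{L}_n$ in Definition~\ref{latch} (and its dual for $\tilde{M}_n$).

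Lifting follows the same template. Given a lifting problem between a Reedy cofibration and a Reedy trivial fibration (or a Reedy trivial cofibration and a Reedy fibration), one constructs the filler degree by degree, at each stage applying the lifting property in $\mathcal{A}$ to the diagram relating $X_n \coprod_{\tilde{L}_nX_{\bullet}} \tilde{L}_nY_{\bullet}$ to the analogous matching object pullback. As with factorization, the harder step is the bookkeeping: one must confirm that the degree-$n$ map so produced assembles into a morphism of simplicial objects, which again follows from the Reedy decomposition of $\Delta^{\op}$ interacting correctly with the coequalizer definition of the latching objects.
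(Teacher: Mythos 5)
The paper offers no proof of this statement: it is imported verbatim from Theorem~15.3.4 of \cite{MR1944041}, so there is no in-paper argument to compare against. Your outline reproduces the standard proof from that source (and from Reedy's original note): degreewise induction driven by the unique inverse-then-direct factorization in $\Delta^{\op}$, with the retract and 2-out-of-3 axioms handled formally and the factorization and lifting axioms reduced to the relative latching-to-matching comparison map $X_n \coprod_{\tilde{L}_nX_{\bullet}} \tilde{L}_nZ_{\bullet} \rightarrow Y_n \times_{\tilde{M}_nY_{\bullet}} \tilde{M}_nZ_{\bullet}$ in $\mathcal{A}$. That is the right strategy and the structural points you emphasize (the role of the unique factorization in extending truncated simplicial objects one degree at a time, functoriality of latching/matching objects for the retract axiom) are correct.

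One substantive ingredient is elided, and it is not mere bookkeeping: the characterization of Reedy \emph{trivial} fibrations (resp.\ trivial cofibrations) as exactly the maps all of whose relative matching (resp.\ latching) maps are trivial fibrations (resp.\ trivial cofibrations) in $\mathcal{A}$ --- the key auxiliary result accompanying Theorem~15.3.4 in \cite{MR1944041}. It is needed in both directions. In your factorization step you produce a second map whose relative matching maps are trivial fibrations, but the theorem defines trivial fibrations as Reedy fibrations that are \emph{pointwise} weak equivalences, so you must show the former condition implies the latter (an induction using that $\tilde{M}_n$ sends such maps to trivial fibrations). Conversely, in your lifting step you are handed a Reedy fibration that is a pointwise weak equivalence and must know its relative matching maps are trivial fibrations \emph{before} you can invoke lifting in $\mathcal{A}$ against the relative latching map of the cofibration; this converse implication is the less formal of the two. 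Neither step is hard, but without them the factorizations do not visibly land in the asserted classes and the lifting argument cannot start, so they should be stated and proved rather than absorbed into ``the harder step is the bookkeeping.''
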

\begin{remark} We do not define the fibrations in the Reedy model structure because they are not used in our paper, but they can be defined in a similar (dual) way to the cofibrations using the matching object construction. \end{remark}
\begin{example}
The main examples of Reedy model categories that we will be interested in will be the Reedy model structure on simplicial objects in symmetric spectra in pointed simplicial sets (for various model structures on symmetric spectra in pointed simplicial sets) and the Reedy model structure on pointed bisimiplicial sets (where we think of them as simplicial objects in the category of pointed simplicial sets with the Quillen model structure). 
\end{example}
The following definition appears as Definition 5.2.1 in Hovey-Shipley-Smith \cite{MR1695653}. 
\begin{definition}\label{spectral latch}
Define $\bar{S}$ to be the symmetric spectrum with $\bar{S}_n=S^n$ for $n\ge 1$ and $\bar{S}_0=*$ with the evident structure maps. Given a symmetric spectrum $X$ define $L_nX$ to be $(\bar{S}\smash X)_n$. The evident map $\bar{S}\rightarrow S$ of symmetric spectra produces a natural transformation $\nu_n(-)\co(\bar{S}\smash -)_n\rightarrow (S\smash -)_n$. 
\end{definition}
\begin{comment}
To generalize the following definition 
The following definition makes reference to $G$-cofibrations (also called genuine $G$-cofibrations) for a finite group $G$. Simplicial sets have the desirable property that $G$-cofibrations are exactly the monomorphisms or in other words the cofibrations in $\sSet$, see Proposition~2.16 of~\cite{Stephan} for a proof of this fact. This fact is mentioned without proof in multiple places, for example in Shipley's original definition of the mixed-equivariant model structure \cite{MR2066511}. The $G$-cofibrations in $\sSet_*$ are also simply the monomorphisms since the cofibrations in $\sSet_*$ are created by the forgetful functor $\sSet_*\rightarrow \sSet$; see Theorem~7.6.5.(1) of~\cite{MR1944041}. 
Since we will work only with symmetric spectra in $\sSet_*$ in this paper, we will omit the definition of $G$-cofibrations, but they correspond to the $\mathcal{F}$-cofibrations for the family $\mathcal{F}$ of all subgroups of $G$ (see Definition 3.7 in \cite{Stephan} for a definition of the $\mathcal{F}$-model structure in a a topological model category $\mathcal{M}$ and Definition 3.6  in \cite{2013arXiv1307.4488G} for a definition of the $\mathcal{F}$-model structure in a model category $\mathcal{M}$ enriched in a cosmos $\mathcal{V}$ satisfying mild assumptions). A consequence of these remarks is that ``$\Sigma_n$-cofibration" can be replaced with ``cofibration" in Definition 2.6 if $\mathcal{C}=\sSet_*$. 
\end{comment}
\begin{definition}\label{def of model structures}
Let $\Sp_{\sSet_*}$ denote the category of symmetric spectra in pointed simplicial sets.  A map $f\co X\rightarrow Y$ in $\sSet_*$ is said to be:
\begin{itemize}[leftmargin=*]
\item a {\em levelwise cofibration} if, for all nonnegative integers $n$, the map $f(n) \co X(n) \rightarrow Y(n)$ is a cofibration in $\sSet_*$, 
\item a {\em positive levelwise cofibration} if $f$ is a levelwise cofibration and $f(0)\co X(0) \rightarrow Y(0)$ is an isomorphism,
\item  a {\em flat cofibration} if, for all nonnegative integers $n$, the latching map $X(n) \underset{L_n X}{\coprod} L_nY \rightarrow Y(n)$ is a cofibration, 
\item and a {\em positive flat cofibration} if $f$ is a flat cofibration and $f(0)\co X(0) \rightarrow Y(0)$ is an isomorphism.
\end{itemize}
\end{definition}
\begin{remark}
The flat cofibrations were first defined in  Hovey-Shipley-Smith \cite{MR1695653} and Shipley \cite{MR2066511} where they are called ``$S$-cofibrations." Following Schwede, we refer to these cofibrations as the flat cofibrations. Note that the definition of the flat cofibration in a more general simplicial pointed model category $\mathcal{C}$ requires that the map 
\[X(n) \coprod_{L_n X} L_nY \rightarrow Y(n) \]
 is a $\Sigma_n$-cofibration.  (See Definition 3.6 in \cite{2013arXiv1307.4488G} for the definition of $G$-cofibrations in a model category $\mathcal{C}$ enriched in a cosmos $\mathcal{V}$ where $G$ is a finite group. In \cite{2013arXiv1307.4488G}, they define more general $\mathcal{F}$-cofibrations for a family of subgroups $\mathcal{F}$ of $G$ and the $G$-cofibrations correspond to taking the family of all subgroups.) Since pointed simplicial sets have the property that $G$-cofibrations are equivalent to the monomorphisms of pointed simplicial sets, or in other words the cofibrations after forgetting the group action, the definition of flat cofibration in Definition \ref{def of model structures} is equivalent to the definition in a general simplicial pointed model category, see Proposition~2.16 of~\cite{Stephan} for a proof of this fact. 
 \end{remark}
The content of Theorem~\ref{existence of flat model structure} below is contained in Theorem III.4.11 of ~\cite{schwedebook}. For a published account in the setting of simplicial sets and topological spaces see Corollary 5.3.8 in Hovey-Shipley-Smith \cite{MR1695653} as well as Proposition 2.2, Theorem 2.4, and Proposition 2.5 in \cite{MR2066511}. To make sure that $\sSet_*$ satisfies the assumptions of Theorem III.4.11 in ~\cite{schwedebook}, we apply Proposition 1.3 in Shipley \cite{MR2066511}, which states that $\sSet_*$ can be equipped with the mixed equivariant model structure. 
\begin{theorem}\label{existence of flat model structure}
Let $\sSet_*$ be the category of pointed simplicial sets. 
Then the flat cofibrations are the cofibrations of a model structure on $\Sp_{\sSet_*}$ called the {\em absolute flat stable model structure.} 
while the positive flat cofibrations are the cofibrations of a model structure on $\Sp_{\sSet_*}$ called the {\em positive flat stable model structure,}  

Equipped with either of these model structures, $\Sp_{\sSet_*}$ is a symmetric monoidal stable model category satisfying the pushout-product axiom and the homotopy category $\Ho(\Sp_{\sSet_*})$
is equivalent to the classical stable homotopy category, with its smash product.
\end{theorem}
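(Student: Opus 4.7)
The plan is to assemble this statement from results already in the literature rather than to give an independent proof from scratch. In outline, the argument proceeds in three stages: verify that $\sSet_*$ has a well-behaved notion of equivariant cofibration, feed this into the abstract machine producing flat model structures on symmetric spectra, and then read off monoidality, stability, and the identification of the homotopy category.

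First, I would record the equivariant input. The abstract theorem we want to apply requires that for each $n$, the category of $\Sigma_n$-objects in $\sSet_*$ admits a model structure whose cofibrations ($\Sigma_n$-cofibrations) are exactly the underlying monomorphisms. This is precisely Proposition~2.16 of \cite{Stephan}, and is the content of the ``mixed equivariant'' model structure invoked via Proposition~1.3 of Shipley \cite{MR2066511}. In particular, for $\mathcal{C} = \sSet_*$ the latching map condition in Definition~\ref{def of model structures} using naive cofibrations is equivalent to the corresponding condition using $\Sigma_n$-cofibrations, so the Definition~\ref{def of model structures} notion of flat cofibration coincides with the one used in the abstract framework.

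Second, I would apply the general existence result. In its abstract form this is Theorem~III.4.11 of \cite{schwedebook}, and in the published record for pointed simplicial sets and topological spaces it is Corollary~5.3.8 of Hovey-Shipley-Smith \cite{MR1695653} together with Proposition~2.2, Theorem~2.4, and Proposition~2.5 of Shipley \cite{MR2066511}. These produce a cofibrantly generated model structure on $\Sp_{\sSet_*}$ whose cofibrations are the flat cofibrations and whose weak equivalences are the stable equivalences. The positive variant is obtained by restricting the generating (trivial) cofibrations to those which are isomorphisms in level zero; the same references verify that this still yields a cofibrantly generated model structure, whose cofibrations are exactly the positive flat cofibrations.

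Third, I would establish the monoidal, stable, and homotopy-categorical conclusions. The symmetric monoidal structure on $\Sp_{\sSet_*}$ is the usual smash product, and the fact that it satisfies the pushout-product axiom with respect to either the absolute or positive flat cofibrations is proved in Theorem~5.3.7(4) of \cite{MR1695653} (see also Theorem~III.5.1 of \cite{schwedebook}); stability follows as in Theorem~III.4.15 of \cite{schwedebook}. The equivalence of $\Ho(\Sp_{\sSet_*})$ with the classical stable homotopy category follows because the weak equivalences of the flat and positive flat model structures agree with those of the projective stable model structure, whose homotopy category was already identified with the classical stable category in \cite{MR1695653}. The step I would expect to be the main obstacle in a self-contained development is the pushout-product axiom for flat cofibrations, since it requires manipulating latching objects and equivariant cofibrations simultaneously; however, this is precisely what the cited references handle, so for our purposes the proof amounts to invoking them.
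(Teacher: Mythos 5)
Your proposal is correct and matches the paper's own treatment: the paper likewise proves this theorem purely by citation, invoking Theorem~III.4.11 of \cite{schwedebook} (with the hypothesis on $\sSet_*$ verified via Proposition~1.3 of \cite{MR2066511} and the identification of $\Sigma_n$-cofibrations with monomorphisms from Proposition~2.16 of \cite{Stephan}), together with the published accounts in Corollary~5.3.8 of \cite{MR1695653} and Propositions~2.2, 2.4, and 2.5 of \cite{MR2066511}. Your slightly more detailed organization into the equivariant input, the existence machine, and the monoidal/stable conclusions is a faithful expansion of the same argument.
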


The absolute flat model structure and the positive flat model structure have equivalent homotopy categories, and have many good properties properties in common. The absolute flat model structure has the advantage of being slightly simpler, while the positive flat model structure has some better properties than the absolute flat model structure when one wants to work with structured symmetric ring spectra. See Section 5 in Hovey-Shipley-Smith \cite{MR1695653}  and \cite{schwedebook} for a discussion of some of the nice properties of the flat model structures. The category of symmetric spectra does {\em not} have a well-behaved symmetric monoidal product when equipped with the levelwise cofibrations or the positive levelwise cofibrations. See~\cite{schwedebook} 
for details. 

Note that the cofibrations in the absolute flat stable model structure agree with those in the flat level model structure. Since we are only working with cofibrations in this paper the results hold in any model structure on symmetric spectra in pointed simplicial sets where the cofibrations are exactly the flat cofibrations. Similarly, the results about positive flat cofibrations hold in any model structure on symmetric spectra in pointed simplicial sets where the cofibrations are exactly the positive flat cofibrations. 
\section{Reedy cofibrant objects: sufficient conditions.}
In classical references on simplicial spaces (see e.g.~\cite{MR0353298}), a simplicial topological space is called ``good" if its degeneracy maps are all closed cofibrations; we give an analogous definition of a ``good" simplicial spectrum. 
\begin{definition} \label{good} 
By a \emph{good simplicial symmetric spectrum in $\sSet_*$},
we mean a simplicial object $X_{\bullet}$ in the category $\Sp_{\sSet_*}$ such that 
\begin{enumerate} 
\item{} $X_{n}$ is a flat-cofibrant symmetric spectrum for each $n$, and
\item{} the degeneracy maps $s_i\co X_{n} \lra X_{n+1}$ are levelwise cofibrations for each $n$ and $i$. 
\end{enumerate}

We will say that $X_{\bullet}$ is {\em positive-good} if $X_{\bullet}$ is good
and
$X_{n}$ is positive flat-cofibrant for each $n$.
\end{definition}
Definition~\ref{good} is unusual-looking, because it refers to two different model structures (``flat'' and ``levelwise''). Here is some explanation:
every flat cofibration is also a levelwise cofibration, so if $X_{\bullet}$ is a simplicial symmetric spectrum which is pointwise flat-cofibrant and whose degeneracies are all flat cofibrations, then $X_{\bullet}$ is good. In Definition~\ref{good}, however, we only ask for the degeneracies to be levelwise cofibrations, not flat cofibrations, because:
\begin{enumerate}
\item checking that a map is a levelwise cofibration in a specific case of interest is typically much easier than checking that it is a flat cofibration, and
\item our main theorem in this section, Theorem~\ref{theorem1}, only needs the degeneracy maps to be levelwise cofibrations, not necessarily flat cofibrations.
\end{enumerate}

Finally, we state the main theorem in this section of the paper, which 
provides an easy-to-check criterion for a simplicial symmetric spectrum to be Reedy flat-cofibrant. The rest of this section is devoted to proving this theorem. 

\begin{theorem} \label{theorem1} 
Let $X_{\bullet}$ be a good simplicial symmetric spectrum in $\sSet_*$. 
Then $X_{\bullet}$ is Reedy flat-cofibrant; i.e. the map $\tilde{L}_nX_{\bullet} \lra X_{n}$ is a flat cofibration for each $n$. If $X_{\bullet}$ is furthermore assumed to be positive-good, then $X_{\bullet}$ is Reedy positive flat-cofibrant. 
\end{theorem}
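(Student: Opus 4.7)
The plan is to reduce the flat-cofibrancy statement to classical results on simplicial pointed simplicial sets, evaluated one spectrum level at a time. The key observation is that both evaluation at spectrum level $k$ (since colimits in $\Sp_{\sSet_*}$ are computed levelwise) and the spectral latching $L_k = (\bar{S}\smash -)_k$ (the composition of the left adjoint $\bar{S}\smash -$ with evaluation at $k$) preserve all colimits. Consequently, for any simplicial symmetric spectrum $Z_{\bullet}$ there are natural isomorphisms
\[ (\tilde{L}_n Z_{\bullet})(k) \cong \tilde{L}_n(Z_{\bullet}(k)) \text{\quad and \quad} L_k \tilde{L}_n Z_{\bullet} \cong \tilde{L}_n(L_k Z_{\bullet}), \]
where the right-hand sides are simplicial latching objects computed in $\sSet_*$.

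I would first establish Reedy levelwise cofibrancy of $X_{\bullet}$. For each spectrum level $k$, the simplicial pointed simplicial set $X_{\bullet}(k)$ has all its degeneracies $s_i(k)$ cofibrations in $\sSet_*$, since $s_i$ is a levelwise cofibration of symmetric spectra by the goodness hypothesis. By the classical Lillig-theorem argument recalled as step~\eqref{s1} of the introduction, $X_{\bullet}(k)$ is therefore Reedy cofibrant, so the map $\tilde{L}_n X_{\bullet}(k) \to X_n(k)$ is a cofibration in $\sSet_*$ for every $k$.

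To upgrade from levelwise cofibrancy to flat cofibrancy, one must show the latching map
\[ \tilde{L}_n X_{\bullet}(k) \coprod_{L_k \tilde{L}_n X_{\bullet}} L_k X_n \to X_n(k) \]
is a cofibration in $\sSet_*$ for every $k$. Using $L_k \tilde{L}_n X_{\bullet} \cong \tilde{L}_n(L_k X_{\bullet})$, this map is exactly the Reedy latching-comparison map at level $n$ for the morphism of simplicial pointed simplicial sets $\nu_k(X_{\bullet}) \co L_k X_{\bullet} \to X_{\bullet}(k)$. I would then apply step~\eqref{s2} of the introduction: this comparison map is a cofibration provided that (a) $X_{\bullet}(k)$ is Reedy cofibrant (handled above), (b) each $\nu_k(X_n) \co L_k X_n \to X_n(k)$ is a cofibration in $\sSet_*$ (which is precisely flat cofibrancy of $X_n$), and (c) $L_k X_{\bullet}$ is itself Reedy cofibrant. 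For (c), naturality of $\nu_k(-)$ supplies the commutative square
\[ \xymatrix{ L_k X_n \ar[r]^{L_k s_i}\ar[d]_{\nu_k(X_n)} & L_k X_{n+1}\ar[d]^{\nu_k(X_{n+1})} \\ X_n(k)\ar[r]_{s_i(k)} & X_{n+1}(k), } \]
whose clockwise composite $s_i(k)\circ \nu_k(X_n)$ is a monomorphism (composite of two monos by the goodness hypotheses), and since $\nu_k(X_{n+1})$ is also a monomorphism (flat cofibrancy of $X_{n+1}$), the map $L_k s_i$ must be a monomorphism as well. The positive case is immediate: if each $X_n(0) = \ast$, then $\tilde{L}_n X_{\bullet}(0) = \ast$ also, so the latching map is an isomorphism at level $0$.

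The main obstacle I anticipate is item~(c). A direct approach would require a general statement that $L_k$ preserves all monomorphisms of symmetric spectra, which would force one to unpack the coequalizer/smash-product definition of $L_k$ in detail; the mono-cancellation trick in the commutative square above exploits flat cofibrancy of both $X_n$ and $X_{n+1}$ simultaneously to sidestep that technicality entirely.
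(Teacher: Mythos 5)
Your proposal is correct, and it organizes the argument in a genuinely different way from the paper. The paper proves flat-cofibrancy of $\tilde{L}_nX_{\bullet}\rightarrow X_n$ by factoring the latching comparison map $F$ through the pushout over the pullback ($F=F'\circ F''$, Steps \ref{step2}--\ref{step3} of its proof) and then showing that $L_s\tilde{L}_nX_{\bullet}$ surjects onto that pullback by an explicit element chase with the maps $\bar{d}_j$ (Step \ref{step4}). You instead use the exchange of the two latching constructions, $L_k\tilde{L}_nX_{\bullet}\cong\tilde{L}_n(L_kX_{\bullet})$ and $(\tilde{L}_nX_{\bullet})(k)\cong\tilde{L}_n(X_{\bullet}(k))$, to reinterpret Reedy flat-cofibrancy of $X_{\bullet}$ as the statement that each $\nu_k(X_{\bullet})\co L_kX_{\bullet}\rightarrow X_{\bullet}(k)$ is a Reedy cofibration of simplicial objects in $\sSet_*$; that reduction is valid (both functors in question preserve colimits, so they commute with $\tilde{L}_n$), is more modular than the paper's diagram chase, and makes transparent exactly where flat-cofibrancy of each $X_n$ enters. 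Two caveats. First, your item (c) is a non-issue: every pointed bisimplicial set is Reedy cofibrant (Corollary 15.8.8 of \cite{MR1944041}, which is also what the paper's Step \ref{step1} uses --- Lillig's theorem is the tool for topological spaces and is not needed in $\sSet_*$), so the mono-cancellation square for $L_ks_i$ is correct but superfluous. Second, and more substantively, the fact you invoke as ``step~\eqref{s2} of the introduction'' is stated there only for topological spaces with closed cofibrations; the analogue for simplicial objects in $\sSet_*$ (a pointwise monomorphism between Reedy cofibrant objects is a Reedy cofibration) is true but still requires proof, and that proof is precisely the degenerate-element chase --- if $f_n(a)=s_j(b)$ then $d_jf_n(a)=b$ and injectivity of $f_n$ forces $a=s_jd_j(a)$, so the identification already happens in the latching pushout --- which is Case 3 of the paper's proof of Theorem~\ref{reedy levelwise cofibrancy lemma} and the technical heart of its Step \ref{step4}. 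So your route does not avoid the hard kernel of the argument; it isolates it in a single standard lemma about bisimplicial sets, which is a real expository gain, but that lemma must be proved or properly cited in the simplicial-set setting rather than borrowed from the topological discussion in the introduction.
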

\begin{proof} We will prove this theorem in multiple steps. 
\begin{steps}\label{step1}
First, we will show that any good simplicial symmetric spectrum in $\sSet_*$ is Reedy levelwise cofibrant; i.e. 
the map $\tilde{L}_nX_{\bullet} \lra X_{n}$ of symmetric spectra is a levelwise cofibration for each $n$. We first consider, for each fixed nonnegative integer $n$, the pointed bisimplicial set $X(n)_{\bullet}$ whose pointed simplicial set of $m$ simplices is obtained by taking the level $n$ pointed simplicial set of the symmetric spectrum of $m$-simplices in the simplicial symmetric spectrum $X_{\bullet}$. By Corollary 15.8.8 of \cite{MR1944041}, it is easy to deduce that all bisimplicial sets are Reedy cofibrant. Since the model strucure on pointed bisimplicial sets is created by the forgetful functor, the same statement is true for pointed bisimplicial sets. (Note that all three natural Reedy model structures on $((\Sets_*)^{\Delta^{\op}})^{\Delta^{\op}}$ are equivalent as model categories by Theorem 15.5.2 of \cite{MR1944041}.) 
Consequently, the map 
\[ \tilde{L}_m(X(n)_{\bullet})\rightarrow X(n)_m \]
is a cofibration of pointed simplicial sets. 
Since the evaluation functor which takes a simplicial symmetric spectrum $X_{\bullet}$ to a pointed bisimplicial set $X(n)_{\bullet}$
is a left adjoint (see 
Example 4.2 \cite{schwedebook}) 
and the simplicial latching construction $\tilde{L}_m(X_{\bullet})$ is constructed as a colimit by Definition \ref{latch} the two commute. Hence, the map 
$(\tilde{L}_mX_{\bullet})(n)\rightarrow X_m(n) $
is a cofibration for each $n$, in other words 
$ \tilde{L}_mX_{\bullet}\rightarrow X_m $
is a levelwise cofibration. \\
\end{steps}
\begin{steps}\label{step2}
We will use the following observation: suppose we have a diagram in pointed simplicial sets of the form 
\[\xymatrix{
PB \ar[d]^{g_1}  \ar[r]^{g_2}  & B \ar[d]_{h_1} \ar[ddr]^{f_2} & \\
C  \ar[r]^{h_2}  \ar[drr]_{f_1} & P \ar[dr]^(.3){F} &  \\
&& D  }\]
where $PB$ is the pullback $C\prod_DB$, $P$ is the pushout $C\coprod_{PB} B$, and $f_1$ and $f_2$ are monomorphisms of pointed simplicial sets. Then we want to show that $F\co P\lra D$ is a monomorphism. 
To prove this claim, we observe that colimits and limits are computed pointwise in pointed simplicial sets because it is a category of functors (see e.g.~\cite{MR1300636}). The proof for pointed sets is an easy exercise and is therefore left to the reader. \\

In order to prove that $X_{\bullet}\in \text{ob}\Sp_{\sSet_*}^{\Delta^{\text{op}}}$ is Reedy flat-cofibrant and not just Reedy levelwise-cofibrant, we need to know that the map $F$ in the commutative diagram 
\begin{equation}\label{diag latch}
 \xymatrix{
L_s \tilde{L}_nX_{\bullet} \ar[r] \ar[d]  & L_s (X_n )  \ar[d] \ar[ddr]  &  \\ 
(\tilde{L}_nX_{\bullet})(s) \ar[r]   \ar[rrd] &           PO_1  \ar[dr]^(.3){F}  &  \\
&&   (X_{n})(s) } \end{equation}
is a cofibration in $\sSet_*$, where $PO_1$ is defined as the pushout $(\tilde{L}_nX_{\bullet})(s) \coprod_{L_s \tilde{L}_nX_{\bullet} } L_s (X_n)$. We break this into two further steps \\
\end{steps}
\begin{steps}\label{step3}
Define $PB$ to be the pullback $(\tilde{L}_nX_{\bullet})(s)\prod_{(X_{n})(s)}L_s (X_n )$ in $\sSet_*$. 
\begin{comment}
\begin{equation} \label{eq 1 cor} \xymatrix{
PB \ar[r] \ar[d] &  L_s (X_n )  \ar[d]  \\ 
(\tilde{L}_nX_{\bullet})(s) \ar[r] & (X_{n})(s) 
}\end{equation} 
\end{comment}
Then we will show that the universal map $F'$ in the commutative diagram 
\begin{equation} \label{eq 2 cor}
\xymatrix{
PB \ar[r]^{g_2} \ar[d]_{g_1}  & L_s (X_n )  \ar[d] \ar[ddr]^{f_1}  &  \\ 
(\tilde{L}_nX_{\bullet})(s) \ar[r]   \ar[rrd]_{f_2} &           PO_2  \ar[dr]^(.3){F'}  &  \\
&&   (X_{n})(s) } \end{equation} 
in $\sSet_*$ 
 is a cofibration, where $PO_2$ is the pushout $(\tilde{L}_nX_{\bullet})(s)\coprod_{PB}L_s (X_n )$.

The map $f_1\co L_s(X_n) \rightarrow (X_n)(s)$ in diagram~\ref{eq 2 cor}
is a cofibration in $\sSet_*$ since, from the definition of a good simplicial spectrum, the spectrum $X_n$ is flat-cofibrant for each $n$, the map $f_2$ in diagram~\ref{eq 2 cor} is a cofibration in $\sSet_*$ by Step~\ref{step1}. Since cofibrations in $\sSet_*$ are exactly the monomorphisms, $f_1$ and $f_2$ are monomorphisms of pointed simplicial sets and the fact that $F'$ is a cofibration
follows by Step~\ref{step2}.
 
Note that by universal properties, we have maps $G\co L_s \tilde{L}_nX_{\bullet} \lra PB$ and $F^{\prime\prime}\co PO_1\lra PO_2$
\begin{comment}
\begin{equation}\label{map 432085}  G\co L_s \tilde{L}_nX_{\bullet} \lra PB \end{equation}
and 
\[ F^{\prime\prime}\co PO_1\lra PO_2, \] 
\end{comment}
and each of these maps fits into Diagram~\ref{diag}, below.
Since $F^{\prime}$ is a cofibration and $F=F^{\prime}\circ F^{\prime\prime}$, we just need to show that $F^{\prime\prime}$ is a cofibration in $\sSet_*$ and that will imply $F$ is a cofibration in $\sSet_*$. 
We claim that we just need to prove that the map $G_m$ is a surjection for each $m$, where $G_m\co(L_s \tilde{L}_nX_{\bullet})_m\rightarrow PB_m$ is the map of pointed sets induced by the evaluation functor $(-)_m\co\sSet_*\rightarrow \Sets_*$. The claim follows by the following argument: 
given the commutative diagram of pointed sets induced by applying the evaluation functor $(-)_m$ to the diagram 
\begin{equation}\label{diag} \xymatrix{ 
L_s\tilde{L}_nX_{\bullet} \ar[dr]^(.7){G} \ar[ddr]_{g_1} \ar[drr]^{g_2}  &&& \\
&PB \ar[r]_{\ell_1} \ar[d]^{\ell_2}  & L_s (X_n ) \ar[dddrr]^{h_1} \ar[d]_{f_1} \ar[ddr]  & &  \\ 
&(\tilde{L}_nX_{\bullet})(s) \ar[r]^{f_2} \ar[drr] \ar[ddrrr]_{h_2}   \ar[rrd] &  PO_1  \ar[dr]^(.3){F^{\prime\prime}} & & \\
&&& PO_2 \ar[dr]^(.2){F^{\prime}} & \\
&& &&  (X_{n})(s), } \end{equation}
 with monomorphisms of pointed sets $(\tilde{L}_nX_{\bullet}(s))_m\rightarrow (X_n(s))_m$, $(L_s(X_n))_m\rightarrow (X_n(s))_m$, and $(PO_2)_m\rightarrow  ((X_{n})(s))_m$,
 and the identifications
 \[ (PO_1)_m=  \left(\tilde{L}_nX_{\bullet}\right)(s))_{m}\underset{(L_s\tilde{L}_nX_{\bullet})_m}{\coprod} (L_s(X_n))_m,\] 
 \[ (PO_2)_m= \left(\tilde{L}_nX_{\bullet}(s)\right)_m  \underset{(PB)_m}{\coprod}(L_s(X_n))_m,\text{ and }\]
 \[(PB)_m=\left( \tilde{L}_nX_{\bullet}(s)\right)_m \underset{\left(X_n(s)\right)_m}{\prod}(L_s(X_n))_m,\] 
 (which hold because $(-)_m$ is both a right and a left adjoint and therefore commutes with limits and colimits in pointed simplicial sets), then if in addition 
\[\left(L_s\tilde{L}_nX_{\bullet}\right)_m \rightarrow (PB)_m  \]
is an epimorphism, then it is easy to check that $(PO_1)_m \rightarrow (PO_2)_m$ is a monomorphism. Hence, it suffices to show that $G_m$ is surjective for all $m$ in order to show that $F^{\prime\prime}\co PO_1\rightarrow PO_2$ is a cofibration of pointed simplicial sets. 
\end{steps}
\begin{steps}\label{step4}
By the previous step, given the commutative diagram \eqref{diag}, it suffices to show that the 
map of pointed sets $G_m\co \left(L_s\tilde{L}_nX_{\bullet}\right)_m \lra PB_m$ is an epimorphism for each $m$ in order to show that $F^{\prime\prime}$ is a cofibration. Throughout, we use the fact that colimits and limits in pointed simplicial sets are computed pointwise and colimits and limits in symmetric spectra in pointed simplicial sets are computed levelwise. 

Let $\bar{z}\in PB_m$. Then since $PB_m$ is a pullback, $\bar{z}$ is represented by elements
\[ x_1=(\ell_1)_m(\bar{z})\in (L_s X_n)_m \text{ and }x_2=(\ell_2)_m(\bar{z})\in \left(\tilde{L}_nX_{\bullet})(s)\right)_m \] 
such that $(h_1)_m(x_1)=(h_2)_m(x_2)$.
Since $x_2\in \left(\tilde{L}_nX_{\bullet}(s)\right)_m = \coprod_{k=0}^{n-1} (X_{n-1}(s))_m\{k \}/\sim$, it can be chosen as an equivalence class of some element in $(X_{n-1}(s)))_m\{j\}$ for some $j$. 
Every element in $X_{n-1}(s)$ is a face of some element in $X_n(s)$, so we can choose $j$ so that the composite 
\[ \xymatrix{ X_n\ar[r]^{d_j} & X_{n-1}\{j\} \ar[r] & \coprod_{k=0}^{n-1} X_{n-1}\{k\}\ar[r]& \coprod_{k=0}^{n-1} X_{n-1}\{k\}/\sim , }\]
which we call $\bar{d}_j$, satisfies $((\bar{d})j(s)_m\circ (h_2)_m)(x_2)=x_2$.

By functoriality of the evaluation functor $(-)_m$ and the spectral latching functor $L_s$ there is also a map $\left(L_s(\bar{d}_j)\right)_m\co  (L_s X_n)_m \lra  (L_s \tilde{L}_nX_{\bullet})_m$. We claim the following: 
\begin{enumerate}
\item{} \label{i1} $(g_1\circ L_s(\bar{d}_j)))_m(x_1)=x_2,$  and
\item{} \label{i2} $(g_2\circ L_s(\bar{d}_j))_m(x_1)=x_1.$ 
\end{enumerate} 
Item~\eqref{i1} follows by naturality of $\nu_s$, which we explain as follows. First, we know $(g_1)_m=(\nu_s(\tilde{L}_nX_{\bullet}))_m$ and naturality states that the diagram 
\[ 
\xymatrix{ 
  (L_sX_n)_m \ar[r]^{(L_s(\bar{d}_j))_m} \ar[d]_{(\nu_s(X_{n}))_m}&   (L_s\tilde{L}_nX_{\bullet})_m \ar[d]^{(\nu_s(\tilde{L}_nX_{\bullet}))_m}\\
(X_n(s))_m  \ar[r]^{(\bar{d}_j (s))_m}&   (\tilde{L}_n(X_{\bullet}(s)))_m
}
\]
commutes; i.e., 
\[ (\nu_s(\tilde{L}_nX_{\bullet})\circ (L_s(\bar{d}_j)))_m(x_1) = (\bar{d}_j(s)\circ \nu_s(X_n))_m(x_1)\] 
We then use the fact that $h_1=\nu_s(X_n)$ and the formula $(h_1)_m(x_1)=(h_2)_m(x_2)$ to produce
%(\bar{d}_j(s)\circ \nu_s(X_n))_m(x_1)=
\[ (\bar{d}_j(s))_m((h_1)_m(x_1))=(\bar{d}_j(s))_m((h_2)_m(x_2)). \]
This combines with the fact that $(\bar{d}_j\circ h_2)_m(x_2))=x_2$ to produce
\[ (g_1\circ L_s(\bar{d}_j))_m(x_1)=x_2 \] 
as desired. 

To prove Item~\eqref{i2}, note that by naturality of  $\nu_s$ the diagram 
\[ \xymatrix{
(L_s(X_n))_m \ar[r]^{(L_s( \bar{d}_j))_m}  \ar[d]_{(\nu_s(X_n))_m }& (L_s\tilde{L}_nX_{\bullet})_m \ar[r]^{(L_s(\tilde{\nu}(X_{\bullet})))_m} \ar[d]^{(\nu_s(\tilde{L}_n(X_{\bullet})))_m} & (L_s(X_n))_m \ar[d]^{(\nu_s(X_n))_m}  \\
(X_n(s))_m \ar[r]^{(\bar{d}_j(s))_m}  & (\tilde{L}_nX_{\bullet}(s))_m \ar[r]^{(\tilde{\nu}(X_{\bullet})(s))_m} & (X_n(s))_m   }\]
commutes. We know that $h_1=\nu_s(X_n)$, so
\begin{equation}\label{formula1} (h_1)_m\circ (L_s(\tilde{\nu}(X_{\bullet})))_m\circ (L_s(\bar{d}_j))_m(x_1)=(\tilde{\nu}(X_{\bullet})(s))_m\circ (\bar{d}_j(s))_m \circ (h_1)_m(x_1) \end{equation}
and since $(h_1)_m(x_1)=(h_2)_m(x_2)$ and $(\bar{d}_j(s)\circ h_2)_m(x_2)=x_2$, we know that
\[ (\bar{d}_j(s) \circ h_1)_m(x_1)=(\bar{d}_j(s)\circ h_2)_m(x_2))=x_2, \]
and hence that  
\[ (\tilde{\nu}(X_{\bullet})(s)\circ \bar{d}_j(s) \circ h_1)_m(x_1) = (\tilde{\nu}(X_{\bullet})(s) \circ \bar{d}_j(s) \circ h_2)_m(x_2) =(\tilde{\nu}(X_{\bullet}(s)))_m(x_2). \] 
Now note that $\tilde{\nu}(X_{\bullet})(s)=h_2$ so 
\[(\tilde{\nu}(X_{\bullet}(s)))_m(x_2)=(h_2)_m(x_2)=(h_1)_m(x_1) \] 
and hence, by Equation \eqref{formula1}, 
\[ (h_1)_m\circ (L_s(\tilde{\nu}(X_{\bullet})))_m\circ (L_s(\bar{d}_j))_m(x_1))= (h_1)_m(x_1)\]
Since $X_{\bullet}$ is pointwise flat-cofibrant, $(h_1)_m$ is a monomorphism, so it is left cancellable and therefore 
\[ ((L_s (\tilde{\nu}(X_{\bullet}) )\circ L_s(\bar{d}_j))_m(x_1)=x_1.\] 
Now note that $(g_2)_m=(L_s (\tilde{\nu}(X_{\bullet})))_m$ by definition, so we have proven the claim. 

Thus, given an element $\bar{z}$ in the pullback represented by $x_1$ and $x_2$, we have constructed an element $z=(L_s(\bar{d}_j))_m(x_1))\in (L_s(\tilde{L}_nX_{\bullet}))_m$, such that $(g_1)_m(z)=x_2$ and $(g_2)_m(z)=x_1$  and hence $G_m(z)=\bar{z}$ as desired. So $G_m$ is surjective for all $m$. 
\end{steps}
\begin{steps}\label{step5}
The goal was to prove that a good symmetric spectrum in the flat model structure is Reedy flat-cofibrant. By Step~\ref{step1}, we know that a good symmetric spectrum in the flat model structure is Reedy levelwise-cofibrant, so we just need to elevate the map $\tilde{L}_nX_{\bullet}\lra X_n$ to a flat cofibration. 
It suffices to show that the map $F$ in the commutative diagram \eqref{diag latch} is a cofibration in $\sSet_*$. We then write $F$ as a composite $F^{\prime}\circ F^{\prime\prime}$. The map $F^{\prime}$ is a cofibration in $\sSet_*$ by Step~\ref{step2}, and the map $F^{\prime\prime}$ is a cofibration in $\sSet_*$ by Step~\ref{step4} and Step~\ref{step5}. Hence, $F$ is a cofibration. 

Now suppose furthermore that $X_{\bullet}$ is positive-good. We need to know that
$X_{\bullet}$ is also cofibrant in the Reedy positive-flat model structure, i.e., that $\nu_n(X_{\bullet})\co \tilde{L}_nX_{\bullet} \rightarrow X_n$ is a positive flat cofibration. Since we have already shown that $\nu_n$ is a flat cofibration, all that remains is to show that $\nu_n(X_{\bullet})(0)\co (\tilde{L}_nX_{\bullet})(0) \rightarrow X_n(0)$ is an isomorphism, i.e., that $\nu_n(X(0)_{\bullet})\co \tilde{L}_n(X(0)_{\bullet}) \rightarrow X(0)_n$ is an isomorphism.
Since $X_{\bullet}$ is positive-good, each $X_n$ is positive flat-cofibrant, so $X(0)_n \cong 0$ for all $n$. This implies that $X(0)_{\bullet}$ is the zero object in pointed simplicial sets, so its latching objects are also all the zero object in pointed sets, hence, $\nu_n(X(0)_{\bullet})$ is an isomorphism, as desired.
\end{steps}
\end{proof} 

\section{Reedy cofibrations: sufficient conditions.}
The proof of Theorem \ref{bigthm} easily follows by combining Theorem \ref{theorem1} with Theorem \ref{reedy levelwise cofibrancy lemma} and Theorem \ref{reedy flat cofibration thm}, which we prove in this section. 
\begin{theorem} \label{reedy levelwise cofibrancy lemma}
Let $X_{\bullet}\stackrel{f_{\bullet}}{\longrightarrow} Y_{\bullet}$ be a morphism of simplicial symmetric spectra in $\sSet_*$. Make the following assumptions:
\begin{itemize}
\item The map $f_{\bullet}$ is a pointwise flat cofibration. That is, for each nonnegative integer $n$, the map of spectra $X_n \stackrel{f_n}{\longrightarrow} Y_n$ is a flat cofibration.
\item 
The simplicial spectrum $Y_{\bullet}$ is Reedy levelwise-cofibrant.
\end{itemize}

Then $f_{\bullet}$ is a Reedy levelwise cofibration. Hence, $|f_{\bullet}|$ is a levelwise cofibration.

If we furthermore assume that $f_{\bullet}$ is a pointwise {\em positive} flat cofibration, then $f_{\bullet}$ is also a Reedy positive levelwise cofibration. Hence $|f_{\bullet}|$ is a positive levelwise cofibration. 
\end{theorem}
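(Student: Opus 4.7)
The plan is to adapt the approach of Steps \ref{step2}--\ref{step4} of the proof of Theorem \ref{theorem1} to this relative setting. A levelwise cofibration of symmetric spectra is exactly a spectrum-level monomorphism of pointed simplicial sets, so it suffices to prove that, for each nonnegative integer $n$ and each spectrum level $s$, the map
\[ F\co X_n(s) \coprod_{(\tilde{L}_nX_{\bullet})(s)} (\tilde{L}_nY_{\bullet})(s) \lra Y_n(s) \]
is a monomorphism in $\sSet_*$. As in Step \ref{step1} of the proof of Theorem~\ref{theorem1}, the evaluation functor $(-)(s)$ commutes with the simplicial latching functor $\tilde{L}_n$, so $(\tilde{L}_nX_{\bullet})(s)=\tilde{L}_n(X(s)_{\bullet})$ and similarly for $Y_{\bullet}$.

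I would then factor $F$ as $F'\circ F''$ by inserting the intermediate pushout
\[ PO_2 := X_n(s)\coprod_{PB}\tilde{L}_n(Y(s)_{\bullet}), \]
where $PB := X_n(s)\prod_{Y_n(s)}\tilde{L}_n(Y(s)_{\bullet})$, exactly as in Step \ref{step3} of the previous proof. Both legs of the cospan defining $PB$ are monomorphisms in $\sSet_*$: the map $X_n(s)\lra Y_n(s)$ because $f_n$ is a flat, hence levelwise, cofibration, and the map $\tilde{L}_n(Y(s)_{\bullet})\lra Y_n(s)$ by the assumed Reedy levelwise cofibrancy of $Y_{\bullet}$. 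Step \ref{step2} of the previous proof then gives that $F'\co PO_2\lra Y_n(s)$ is a monomorphism.

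It remains to show that $F''$ is a monomorphism. By the set-theoretic argument given in Step \ref{step3} of the previous proof, applied at each simplex level $m$, this reduces to showing that the universal comparison $G_m\co (\tilde{L}_n(X(s)_{\bullet}))_m\lra PB_m$ is surjective; this is the main technical step. Given $(x_1,x_2)\in PB_m$, I would pick a representative $\bar{y}\in Y_{n-1}(s)_m\{k\}$ of $x_2$ and consider the class of $d_k(x_1)\in X_{n-1}(s)_m\{k\}$ in $(\tilde{L}_n(X(s)_{\bullet}))_m$. The identity $d_ks_k=\mathrm{id}$ gives $f_{n-1}(s)_m(d_k(x_1))=\bar{y}$, and the monomorphism property of $f_n(s)_m$ combined with the equation $f_n(s)_m(s_k(d_k(x_1)))=s_k(\bar{y})=f_n(s)_m(x_1)$ forces $s_k(d_k(x_1))=x_1$; hence the class of $d_k(x_1)$ maps under $G_m$ to $(x_1,x_2)$ as required.

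For the positive version, the extra hypothesis that each $f_n(0)$ is an isomorphism implies $X(0)_{\bullet}\lra Y(0)_{\bullet}$ is an isomorphism of simplicial pointed sets, so the Reedy comparison map at $s=0$ is an isomorphism for every $n$; combined with the first part this yields the Reedy positive levelwise cofibration conclusion, and hence that $|f_{\bullet}|$ is a positive levelwise cofibration.
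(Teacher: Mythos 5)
Your proof is correct, and its mathematical core coincides with the paper's, though the packaging differs. The paper's own proof of this theorem does not reuse the $PO_1\rightarrow PO_2\rightarrow Y_n(s)$ factorization from the proof of Theorem~\ref{theorem1}; instead it verifies directly, by a three-case element chase in pointed sets, that the comparison map out of the pushout $X_n(m)_\ell\coprod_{(\tilde{L}_nX_{\bullet}(m))_\ell}(\tilde{L}_nY_{\bullet}(m))_\ell$ into $(Y_n(m))_\ell$ is injective. The content is the same in both arguments: your requirement that the two legs into $Y_n(s)$ be monomorphisms is exactly the paper's Cases 1 and 2 (using that $f_n$ is flat, hence levelwise, cofibrant, and that $Y_{\bullet}$ is Reedy levelwise-cofibrant), and your surjectivity of $G_m$ --- lifting $(x_1,x_2)\in PB_m$ to the class of $d_k(x_1)$ in the $k$th summand and using left-cancellability of the monomorphism $(f_n(s))_m$ to force $s_kd_k(x_1)=x_1$ --- is precisely the paper's Case 3, where the element $\overline{d}_k(x_0)$ is shown to map to $x_0$ under $\nu_X$ and to $x_1$ under $\tilde{L}_n(f_{\bullet})$, so that $x_0$ and $x_1$ are already identified in the pushout. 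Your route buys a cleaner checklist (two monomorphisms plus one surjection, then quote Steps~\ref{step2} and~\ref{step3}), at the cost of leaning on the machinery of the earlier proof; the paper's case analysis is self-contained but longer. Your handling of the positive case (each $f_n(0)$ an isomorphism forces $X(0)_{\bullet}\rightarrow Y(0)_{\bullet}$ to be an isomorphism of simplicial objects, hence the Reedy comparison map is an isomorphism at spectrum level $0$) also agrees with the paper's.
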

\begin{proof}
For each nonnegative integer $n$, we need to show that the map of symmetric spectra
\[ X_{n} \coprod_{\tilde{L}_n(X_{\bullet})}\tilde{L}_n(Y_{\bullet}) \rightarrow Y_{n}\]
is a levelwise cofibration, i.e., that 
\begin{equation}\label{map 4398} \left(X_{n} \coprod_{\tilde{L}_n(X_{\bullet})}\tilde{L}_n(Y_{\bullet})\right)(m) \rightarrow Y_{n}(m)\end{equation}
 is a cofibration in $\sSet_*$ for all nonnegative integers $m,n$.
Now colimits in $\Sp_{\sSet_*}$ are computed levelwise (see example~I.3.5 in~\cite{schwedebook}), %cite someone else besides schwede? qx
so map~\ref{map 4398} agrees, up to an isomorphism, with the map
\begin{equation}\label{map 4399} c_{n}(m)\co X_{n}(m) \coprod_{\left(\tilde{L}_n(X_{\bullet})\right)(m)}\left(\tilde{L}_n(Y_{\bullet})\right)(m) \rightarrow Y_{n}(m).\end{equation}

We again write $(-)_\ell\co\sSet_*\rightarrow \Sets_*$ for the usual evaluation functor at a nonnegative integer $\ell$. As in all functor categories, colimits in $\sSet_*$ are computed pointwise and the monomorphisms in $\sSet_*$, which are also the cofibrations in $\sSet_*$, are the pointwise monomorphisms. Hence, if we can show that $(c_{n}(m))_{\ell}$ is a monomorphism for each nonnegative integer $\ell$, then $c_{n}(m)$ is a cofibration, and we are done. Since $(-)_\ell$ preserves finite colimits, the map $(c_{n}(m))_\ell$ agrees (up to an isomorphism in the domain) with the map
\[ (\tilde{c}_{n}(m))_\ell\co \left(X_{n}(m)\right)_\ell \coprod_{\left(\left(\tilde{L}_n(X_{\bullet})\right)(m)\right)_\ell}\left(\left(\tilde{L}_n(Y_{\bullet})\right)(m)\right)_\ell \rightarrow \left(Y_{n}(m)\right)_\ell.\]

Now suppose that 
\[ x_0,x_1\in \left((X_{n})(m)\right)_\ell \coprod_{\left(\left(\tilde{L}_n(X_{\bullet})\right)(m)\right)_\ell}\left(\left(\tilde{L}_n(Y_{\bullet})\right)(m)\right)_\ell \]
satisfy $(\tilde{c}_{n}(m))_\ell(x_0) = (\tilde{c}_{n}(m))_\ell(x_1)$.
Then, by the usual description of pushouts in the category of pointed sets as unions, there are three possibilities: 

\begin{case} Suppose $x_0$ and $x_1$ are both in $\left((X_{n})(m)\right)_\ell$. 
Since $X_n \rightarrow Y_n$ is a flat cofibration, it is also a levelwise cofibration (see~\cite{schwedebook}), %cite someone else besides schwede? qx
i.e.,
the map $X_n(m) \rightarrow Y_n(m)$ is a cofibration in $\mathcal{C}$ for all $n$. Hence 
$(X_n(m))_\ell \rightarrow (Y_n(m))_\ell$ is a monomorphism
and hence $x_0 = x_1$.
\end{case}

\begin{case}
Suppose $x_0$ and $x_1$ are both in $\left(\left(\tilde{L}_n(Y_{\bullet})\right)(m)\right)_\ell$. 
Since $Y_{\bullet}$ is Reedy levelwise-cofibrant, the map
$\tilde{L}_n(Y_{\bullet}) \rightarrow Y_n$ is a levelwise cofibration, i.e.,
$(\tilde{L}_n(Y_{\bullet}))(m) \rightarrow Y_n(m)$
is a cofibration in $\sSet_*$ for all $m$, and consequently
$(\tilde{L}_n(Y_{\bullet})(m))_\ell \rightarrow (Y_n(m))_\ell$
is a monomorphism. 
Hence $x_0 = x_1$. 
\end{case}

\begin{case}
Suppose $x_0$ is in $\left(X_{n}(m)\right)_\ell$ and $x_1$ is in $\left(\left(\tilde{L}_n(Y_{\bullet})\right)(m)\right)_\ell$. 
The same argument as below also works in the case $x_1$ is in $\left(X_{n}(m)\right)_\ell$ and $x_0$ is in $\left(\left(\tilde{L}_n(Y_{\bullet})\right)(m)\right)_\ell$.
This part requires a bit more thought than the previous parts. 
Given any finitely complete, finitely co-complete category $\mathcal{A}$ and any simplicial object $Z_{\bullet}$ of $\mathcal{A}$, the latching object $\tilde{L}_n(Z_{\bullet})$ of $Z_{\bullet}$ is isomorphic, by Definition~\ref{latch}, to the coequalizer of a pair of maps whose codomain is a coproduct of $n$ copies of $Z_{n-1}$, namely, one for each degeneracy map $Z_{n-1}\rightarrow Z_{n}$, and the domains as well as the maps themselves are built from finite limits and finite colimits of copies of $Z_m$ for various $m<n-1$ and the degeneracy maps connecting them. If $\mathcal{A}$ is the category of sets (or pointed sets), then $\tilde{L}_n(Z_{\bullet})$ is simply a coproduct of $n$ copies of $Z_{n-1}$ modulo equivalence relations coming from identifying subsets of the copies of $Z_{n-1}$ given by intersections of copies of $Z_m$ for $m<n-1$. For each $k\in \{ 0, \dots ,n-1\}$ we have a map $\overline{d}_k\co Z_n \rightarrow \tilde{L}_nZ_{\bullet}$ given by applying the face map $d_k\co Z_n \rightarrow Z_{n-1}$ and then including $Z_{n-1}$ as the $k$th coproduct summand in $\tilde{L}_nZ_{\bullet} = \left(\coprod_{i=0}^{n-1} Z_{n-1}\right)/\sim.$

Now since taking the $m$-th pointed simplicial set is a functor from $\Sp_{\sSet_*}$ to $\sSet_*$, applying the functor $Z \mapsto Z(m)$ to a simplicial symmetric spectrum yields a simplicial object of $\sSet_*$. 
As limits and colimits in symmetric spectra are computed levelwise (see Example~I.3.5 in~\cite{schwedebook}), %cite someone else besides schwede? qx
this functor $Z\mapsto Z(m)$ also commutes with limits and colimits.
Let
$X_{\bullet}(m), Y_{\bullet}(m)$ denote the bisimplicial pointed set obtained by applying the $m$th ``space'' functor to $X_{\bullet}$ and $Y_{\bullet}$, respectively. 
The fact that the $m$-th pointed simplicial set functor preserves limits and colimits now implies that the map $(\tilde{L}_n(Y_{\bullet}))(m) \rightarrow Y_n(m)$
agrees, up to an isomorphism in the domain, with the map
$\nu\co \tilde{L}_n\left(Y_{\bullet}(m)\right) \rightarrow Y_n(m)$.
Since $(-)_\ell$ preserves finite limits and finite colimits, the induced map $(\nu)_\ell$ agrees,
up to an isomorphism in the domain, with the map
$\nu_Y\co \tilde{L}_n\left((Y_{\bullet})_\ell(m)\right) \rightarrow (Y_n)_\ell(m)$.
Similarly, applying $(-)_\ell$ to the map $\tilde{L}_n\left(X_{\bullet}(m)\right) \rightarrow X_n(m)$ yields, up to an isomorphism in the domain,
the map 
$\nu_X\co \tilde{L}_n\left((X_{\bullet})_\ell(m)\right) \rightarrow (X_n)_\ell(m)$.

Here is the relevant consequence: 
we can choose an integer $k\in \{ 0, \dots , n-1\}$ such that 
\[ x_1\in ((\tilde{L}_nY_{\bullet})(m))_\ell \cong \tilde{L}_n((Y_{\bullet})(m))_\ell \] is in the $k$th coproduct summand
in $\tilde{L}_n(Y_{\bullet}(m))_\ell = \left(\coprod_{i=0}^{n-1} (Y_{n-1}(m))_\ell\right)/\sim$. 
Then $\overline{d}_k(\nu_Y(x_1)) = x_1$ by design.
Now the element $\overline{d}_k(x_0)\in \tilde{L}_n((X_{\bullet})_\ell(m))$
has two important features: we have equalities
\begin{align*} 
 (\tilde{L}_n((f_{\bullet})_\ell(m)))(\overline{d}_k(x_0)) 
  &= \left(\overline{d}_k((f_n)_\ell(m))\right)(x_0) \\
  &= \overline{d}_k(\nu_Y(x_1)) \\
  &= x_1,
\end{align*}
and we have equalities
\begin{align*} 
 \left(((f_n)(m))_\ell\right)\left( \nu_X(\overline{d}_k(x_0))\right) 
  &= \left( \nu_Y( \tilde{L}_n((f_{\bullet})_\ell)(m))\right)(\overline{d}_k(x_0)) \\
  &= \nu_Y(x_1) \\
  &= ((f_n)(m))_\ell(x_0).
\end{align*}
Since each morphism of symmetric spectra $f_n\co X_n \rightarrow Y_n$ is a flat cofibration, it is also a levelwise cofibration (see Corollary~3.12 in Schwede's book~\cite{schwedebook}), %cite someone else besides schwede? qx
hence each $f_n(m)$ is a cofibration in $\sSet_*$ and hence each
$((f_n)(m))_s$ is a monomorphism of pointed sets, hence left-cancellable, and 
so \[ \nu_X(\overline{d}_k(x_0)) = x_0.\]

Now $\nu_X(\overline{d}_k(x_0)) = x_0$ and 
$(\tilde{L}_n((f_{\bullet})_\ell(m)))(\overline{d}_k(x_0))  = x_1$ together imply
that the element $\overline{d}_k(x_0) \in \tilde{L}_n((X_{\bullet})_\ell)(m)$ maps to 
$x_0$ and to $x_1$ under the maps in the diagram
\[\xymatrix{
 \tilde{L}_n((X_{\bullet})_\ell(m)) \ar[rr]^{\tilde{L}_n((f_{\bullet})_\ell(m))} \ar[d]_{\nu_X} & & \tilde{L}_n((Y_{\bullet})_\ell(m)) \ar[d]_{\nu_Y} \\ 
 ((X_n)(m))_\ell \ar[rr]^{((f_n)(m))_\ell} & & (Y_n(m))_\ell}.\]

Consequently, $x_0$ and $x_1$ represent the same element in the pushout $(\tilde{L}_nY_{\bullet}(m))_\ell\coprod_{(\tilde{L}_nX_{\bullet}(m))_\ell} (X_n(m))_\ell.$
Consequently, the map 
\[ \tilde{L}_n(((Y_{\bullet})(m))_\ell)\coprod_{\tilde{L}_n(((X_{\bullet})(m))_\ell)} (X_n(m))_\ell  \rightarrow (Y_n(m))_\ell\]
given by the universal property of the pushout
is injective. Since the evaluation functor $(-)_\ell$ commutes with finite limits and colimits, hence also with pushouts and with the formation of latching objects,
we get that 
the map given by the universal property of the pushout
\[ \left( \tilde{L}_n(Y_{\bullet})(m) \coprod_{\tilde{L}_n(X_{\bullet})(m)} X_n(m)\right)_\ell  \rightarrow (Y_n(m))_\ell\]
is also a monomorphism, hence that 
\[  \tilde{L}_n(Y_{\bullet})(m) \coprod_{\tilde{L}_n(X_{\bullet})(m)} X_n(m)  \rightarrow Y_n(m)\]
is a cofibration in $sSet_*$ for each $n$ and $m$.
Hence
\[  \tilde{L}_n(Y_{\bullet}) \coprod_{\tilde{L}_n(X_{\bullet})} X_n  \rightarrow Y_n\]
is a levelwise cofibration in $\Sp_{\sSet_*}$, hence $f_{\bullet}$ is a Reedy levelwise cofibration, as claimed.

If we furthermore assume that $f_{\bullet}$ is a pointwise positive flat cofibration, then $f_n(0)\co X_n(0) \rightarrow Y_n(0)$ and $L_nf(0)\co L_nX(0) \rightarrow L_nY(0)$ are isomorphisms for all $n$.
Consequently, the map 
 \[ \tilde{L}_n(Y_{\bullet})(0) \coprod_{\tilde{L}_n(X_{\bullet})(0)} X_n(0) \rightarrow Y_n(0)\]
is an isomorphism,
and consequently the canonical comparison map
\[ \left(\tilde{L}_n(Y_{\bullet}) \coprod_{\tilde{L}_n(X_{\bullet})} X_n\right)(0) \rightarrow Y_n(0)\]
is an isomorphism, which makes $f_{\bullet}$ not only a Reedy levelwise cofibration but a Reedy {\em positive} levelwise cofibration.
\end{case}
\end{proof}

\begin{theorem}\label{reedy flat cofibration thm}
Let $X_{\bullet}\stackrel{f_{\bullet}}{\longrightarrow} Y_{\bullet}$ be a morphism of simplicial symmetric spectra in $\sSet_*$. Make the following assumptions:
\begin{itemize}
\item The map $f_{\bullet}$ is a pointwise flat cofibration. That is, for each nonnegative integer $n$, the map $X_n \stackrel{f_n}{\longrightarrow} Y_n$ is a flat cofibration.
\item Both $X_{\bullet}$ and $Y_{\bullet}$ are Reedy flat-cofibrant.
\end{itemize}

Then $f_{\bullet}$ is a Reedy flat cofibration, and consequently the map of geometric realizations $\left| f_{\bullet}\right| \co \left| X_{\bullet}\right|\rightarrow  \left|Y_{\bullet} \right|$ is a flat cofibration.

If we furthermore assume that $f_{\bullet}$ is a pointwise {\em positive} flat cofibration, then $f_{\bullet}$ is a Reedy positive flat cofibration, hence $\left| f_{\bullet}\right|$ is a positive flat cofibration. 
\end{theorem}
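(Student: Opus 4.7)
The plan is to reduce the Reedy flat cofibration check to a simplex-by-simplex monomorphism check in pointed sets, in the style of Theorem \ref{theorem1} and Theorem \ref{reedy levelwise cofibrancy lemma}, using a surjectivity consequence of the flat cofibration condition as the new ingredient. Writing $P := X_n \coprod_{\tilde{L}_n X_\bullet} \tilde{L}_n Y_\bullet$, the task is to show that for each $n$ and $s$ the latching map
\[ F_{n,s} \co P(s) \coprod_{L_s P} L_s Y_n \to Y_n(s) \]
is a cofibration in $\sSet_*$. Since $Y_\bullet$ is Reedy flat-cofibrant it is in particular Reedy levelwise-cofibrant, so Theorem \ref{reedy levelwise cofibrancy lemma} applies and gives that $f_\bullet$ is a Reedy levelwise cofibration; in particular $P(s) \to Y_n(s)$ is a monomorphism in $\sSet_*$. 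Moreover, starting from $0 = \tilde{L}_0 Y_\bullet \to Y_0$ and inducting on $n$, each $Y_n$ is flat-cofibrant, so $L_s Y_n \to Y_n(s)$ is also a monomorphism.

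The key technical observation is the following: if $g \co A \to B$ is a flat cofibration and $B$ is flat-cofibrant, then the canonical comparison map $L_s A \to A(s) \times_{B(s)} L_s B$ is a surjection in each simplicial degree. This is a reformulation in pointed sets of the defining property of flat cofibrations (the pushout $A(s) \coprod_{L_s A} L_s B \to B(s)$ being a monomorphism), combined with the monomorphism properties of $L_s B \to B(s)$ and $A(s) \to B(s)$. I would apply this observation with $g = f_n$ and with $g = \nu_n(Y_\bullet) \co \tilde{L}_n Y_\bullet \to Y_n$; the latter is a flat cofibration by the Reedy flat-cofibrancy of $Y_\bullet$.

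To check $F_{n,s}$ is a monomorphism of pointed simplicial sets, I would work pointwise in pointed sets at each simplex degree $m$, using that pushouts and the evaluation functors $(-)(s)$ and $(-)_m$ commute with finite limits and colimits. Given $u_0, u_1$ in the pushout $(P(s))_m \coprod_{(L_s P)_m} (L_s Y_n)_m$ with equal images in $(Y_n(s))_m$, there are three cases according to coproduct summand. The ``both in $(P(s))_m$'' and ``both in $(L_s Y_n)_m$'' cases follow immediately from the monomorphism properties established above. The mixed case splits according to whether the representative of the element in $(P(s))_m = (X_n(s))_m \coprod_{(\tilde{L}_n X_\bullet(s))_m} (\tilde{L}_n Y_\bullet(s))_m$ comes from $(X_n(s))_m$ or from $(\tilde{L}_n Y_\bullet(s))_m$; in the first subcase I apply the key observation with $g = f_n$ to lift the pair to $(L_s X_n)_m$, whose image in $(L_s P)_m = (L_s X_n)_m \coprod_{(L_s \tilde{L}_n X_\bullet)_m} (L_s \tilde{L}_n Y_\bullet)_m$ provides a common preimage identifying the two representatives, and in the second subcase I apply it with $g = \nu_n(Y_\bullet)$ to lift to $(L_s \tilde{L}_n Y_\bullet)_m$ and then to $(L_s P)_m$. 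The positive case follows by noting that when $f_\bullet$ is pointwise positive flat-cofibrant, each $f_n(0)$ is an isomorphism, hence $X_\bullet(0) \to Y_\bullet(0)$ is an isomorphism of simplicial pointed sets, which forces $\tilde{L}_n X_\bullet(0) \to \tilde{L}_n Y_\bullet(0)$ to be an isomorphism, and consequently the pushout-corner map $P(0) \to Y_n(0)$ is also an isomorphism. Finally, the conclusion about geometric realizations follows from the standard fact that $|\cdot|$ is a left Quillen functor from the Reedy model structure on simplicial objects in the (positive) flat stable model structure on $\Sp_{\sSet_*}$. The main obstacle is cleanly carrying out the mixed subcase, where one must trace the pushout structures of $P$ and of $L_s P$ through the levelwise description and verify that the common lift really does identify the two elements in the pushout.
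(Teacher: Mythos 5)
Your proposal is correct and follows essentially the same route as the paper's proof: the same double-pushout decomposition ($P=\underline{PO}^n$ and then the latching pushout over $L_sP$), the same appeal to Theorem~\ref{reedy levelwise cofibrancy lemma} for the ``both in $P(s)$'' case, and the same three-case (with two mixed subcases) element-chase in pointed sets. The only difference is cosmetic: you isolate the ``surjectivity of $L_sA\to A(s)\times_{B(s)}L_sB$'' reformulation of flatness as an explicit lemma, whereas the paper inlines exactly that argument when it pulls the two representatives back to a common element of $L_m(X_n)$ or $L_m(\tilde{L}_nY_{\bullet})$.
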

\setcounter{case}{0}
\begin{proof}
Let $\underline{PO}^n$ denote the symmetric spectrum in $\sSet_*$ defined to be the pushout in the square
\[ \xymatrix{
 \tilde{L}_n(X_{\bullet}) \ar[r]^{\tilde{L}_n(f_{\bullet})} \ar[d]^{\tilde{\nu}_n} & \tilde{L}_n(Y_{\bullet}) \ar[d] \\
 X_n \ar[r] & \underline{PO}^n, }\]
and let $PO(n,m)$ denote the pointed simplicial set defined to be the pushout in the square
\[ \xymatrix{
 L_m(\underline{PO}^n) \ar[d]^{\nu_n} \ar[r] & L_m(Y_n) \ar[d] \\
  \underline{PO}^n(m) \ar[r] & PO(n,m).}\]
We need to show that the canonical map $c_{n,m}\co PO(n,m) \rightarrow Y_n(m)$, given by the universal property of the pushout, is a cofibration in $\sSet_*$ for all nonnegative integers $m$ and $n$.
Indeed, fix $n$, and suppose we have shown that $c_{n,m}$ is a cofibration for all values of $m$. This is exactly the condition required for the canonical map $\underline{PO}^n \rightarrow Y_n$, given by the universal property of the pushout, to be a flat cofibration. If we show that this canonical map is a flat cofibration for all $n$, then we have shown $f_{\bullet}$ a Reedy flat cofibration, by definition.

To show that $c_{n,m}$ is a cofibration, we explicitly check for each nonnegative integer $\ell$ that, after applying the functor $(-)_\ell$ as in Theorem \ref{reedy levelwise cofibrancy lemma}, the map $(c_{n,m})_\ell$ is a monomorphism. {\em Throughout, we freely make use of the fact that $(-)_\ell$ preserves finite limits and finite colimits, and consequently sends latching objects to latching objects,} as discussed in the proof of Theorem~\ref{reedy levelwise cofibrancy lemma}.
Suppose that $x,y\in (PO(n,m))_\ell$ are elements satisfying $(c_{n,m})_\ell(x) = (c_{n,m})_\ell(y)$. Then, since pushouts in simplicial sets are computed pointwise, %(see~\cite{MR1300636})
there are three possibilities:

\begin{case}
Suppose $x,y$ are the images of elements $\overline{x},\overline{y} \in (L_m(Y_n))_\ell$ under the map $(L_m(Y_n))_\ell \rightarrow (PO(n,m))_\ell$. 
$Y_{\bullet}$ is assumed to be Reedy flat cofibrant, hence is levelwise flat-cofibrant, hence $Y_n$ is flat-cofibrant and hence the map $(L_m(Y_n))_\ell \rightarrow (Y_n(m))_\ell$ is a monomorphism of pointed sets for all $m$, and consequently $x=y$.
\end{case}
\begin{case} 
Suppose $y$ is the image of some element $\overline{y}$ under the map $(L_m(Y_n))_\ell \rightarrow (PO(n,m))_\ell$ and $x$
is the image of some element $\overline{x}$ under the map $(\underline{PO}^n(m))_\ell \rightarrow (PO(n,m))_\ell$.  
Then there are two sub-cases to consider:
\begin{subcase}
Suppose $x$ is the image of some element $\overline{x}^{\prime}\in (X_n(m))_\ell$ under the map $(X_n(m))_\ell \rightarrow (\underline{PO}^n(m))_\ell$.
Since $f_{\bullet}$ is assumed a levelwise flat cofibration, the map $X_n \stackrel{f_n}{\longrightarrow} Y_n$ is a flat cofibration,
and hence the map \[ (X_n(m))_\ell \coprod_{(L_m(X_n))_\ell} (L_m(Y_n))_\ell \rightarrow (Y_n(m))_\ell\] is a monomorphism of pointed sets. This map factors through a map 
\[ (X_n(m))_\ell \coprod_{(L_m(X_n))_\ell} (L_m(Y_n))_\ell \lra (PO(n,m))_\ell \] 
which is also a monomorphism since it is the first map in a composite map that is a monomorphism. By commutativity of the relevant diagrams, this implies that $x=y$. 
\end{subcase}
\begin{subcase}
Suppose $x$ is the image of some element $\overline{x}^{\prime}\in ((\tilde{L}_n(Y_{\bullet}))(m))_\ell$ under the map\newline $((\tilde{L}_n(Y_{\bullet}))(m))_\ell \rightarrow (\underline{PO}^n(m))_\ell$. 
Then $\overline{x}^{\prime},\overline{y}^{\prime}$ each define an element $x^{\prime},y^{\prime}$ in the pushout set 
\[ \left( (\tilde{L}_n(Y_{\bullet}))(m) \coprod_{L_m(\tilde{L}_n(Y_{\bullet}))} L_m(Y_n)\right)_\ell,\]
and these two elements map to the same element of $(Y_n(m))_\ell$, since
$(c_{n,m})_\ell(x) = (c_{n,m})_\ell(y)$. Since $Y_{\bullet}$ is Reedy flat-cofibrant, the map of pointed sets
\[ \left((\tilde{L}_n(Y_{\bullet}))(m) \coprod_{L_m(\tilde{L}_n(Y_{\bullet}))} L_m(Y_n)\right)_\ell \rightarrow (Y_n(m))_\ell\]
is a monomorphism; consequently $x^{\prime} = y^{\prime}$, and hence
$x^{\prime},y^{\prime}$ both pull back to a single element $u\in (L_m(\tilde{L}_n(Y_{\bullet})))_\ell$, and the image of this element under the map
\[ (L_m(\tilde{L}_n(Y_{\bullet})))_\ell \rightarrow (PO(n,m))_\ell\]
is equal to both $x$ and $y$; hence $x=y$.
\end{subcase}
\end{case}
\begin{case}
Suppose $x,y$ are the images of elements $\overline{x},\overline{y}\in (\underline{PO}^n(m))_\ell$ under the map \[ (\underline{PO}^n(m))_\ell \rightarrow (PO(n,m))_\ell. \] 
By Theorem~\ref{reedy levelwise cofibrancy lemma} the map $\underline{PO}^n\rightarrow  Y_n$ is a levelwise cofibration so $((\underline{PO}^n)_m)_\ell\rightarrow (Y_n(m))_\ell$ is a monomorphism and therefore $\bar{x}=\bar{y}$, which implies $x=y$. 
\end{case}

The above argument shows that the canonical comparison map
\[ c_{n,m}\co  PO(n,m)=  L_m(Y_n)\coprod_{L_m(\underline{PO}^n)} \underline{PO}^n(m) \rightarrow Y_n(m)\]
is a cofibration in $\sSet_*$ for all $m$ and $n$, hence that the canonical map of symmetric spectra in pointed simplicial sets
\begin{equation}\label{comparison map 40834} \tilde{L}_n(Y_{\bullet})\coprod_{\tilde{L}_n(X_{\bullet})} X_n = \underline{PO}^n \rightarrow Y_n\end{equation}
 is a flat cofibration for all $n$, hence that
$f_{\bullet}\co X_{\bullet} \rightarrow Y_{\bullet}$ is a Reedy flat cofibration.
If we furthermore assume that $f_{\bullet}$ is a pointwise {\em positive} flat cofibration, then by Theorem~\ref{reedy levelwise cofibrancy lemma}, the map~\eqref{comparison map 40834} is a positive levelwise cofibration in addition to being a flat cofibration; so $f_{\bullet}$ is a Reedy positive flat cofibration, as claimed.
\end{proof}

\section{Application}  
We now give an example of a situation where the main theorem is useful in a practical situation. Suppose we have an explicit model for the pointwise cofiber of a map of simplicial symmetric spectra in pointed simplicial sets $X_{\bullet}\rightarrow Y_{\bullet}$, where by pointwise cofiber we mean a simplicial object $Z_{\bullet}$ with a map $Y_{\bullet}\rightarrow Z_{\bullet}$ so that, for each $n$, the object $Z_n$ is the colimit (the categorical colimit, not a homotopy colimit requiring factorizations or replacements of the maps in the diagram!) of the diagram
\[ 
	\xymatrix{ 
		X_n\ar[r] \ar[d] &  Y_n  \\
		0 
	} 
\]
If the map of simplicial symmetric spectra in pointed simplicial sets $X_{\bullet}\rightarrow Y_{\bullet}$ is a pointwise flat cofibration then $Z_n$ is isomorphic to the  the homotopy cofiber of the map $X_n\rightarrow Y_n$. 
If, in addition, the objects $X_{\bullet}$ and  $Y_{\bullet}$ are Reedy cofibrant simplicial objects in $\Sp_{\sSet_*}$,  then Theorem \ref{reedy flat cofibration thm} applies, elevating the map $X_{\bullet}\rightarrow Y_{\bullet}$ to a Reedy cofibration so that 
\begin{equation}\label{cof geom real} |X_{\bullet}|\rightarrow |Y_{\bullet}|\rightarrow |Z_{\bullet}|. \end{equation}
is a homotopy cofiber sequence in spectra.

In other words, we do not need to cofibrantly replace any of the objects or use any factorization systems and yet the maps $|X_{\bullet}|\rightarrow |Y_{\bullet}|$ and $0\rightarrow |Z_{\bullet}|$ are still flat cofibrations so $|Z_{\bullet}|$ is indeed the homotopy cofiber of the map $|X_{\bullet}|\rightarrow |Y_{\bullet}|$. Consequently, $|Z_{\bullet}|$ is a spectrum constructed by an explicit point-set, levelwise pushout construction, but whose homotopy groups are concretely computable via the long exact sequence of the homotopy cofiber sequence \eqref{cof geom real}. 
We use this in a critical way in \cite{thhmay} for example. 

\begin{comment}
Without the theorem, we could still cofibrantly replace the map in the Reedy model structure to produce a factorization 
\[ 
	\xymatrix{ 
		X_{\bullet} \ar[r] \ar[dr] &   Y_{\bullet} \\
				& \tilde{Y}_{\bullet}  \ar[u]}\]
where $X_{\bullet}\rightarrow \tilde{Y}_{\bullet}$ is a Reedy cofibration. However, the colimit (again not the homotopy colimit)
\[ 
	\xymatrix{ 
		X_{\bullet} \ar[r] \ar[d] &   \tilde{Y}_{\bullet} \ar[d]\\
		0 \ar[r] & \tilde{W}_{\bullet}, 
	}
\] 
will not necessarily have the property that $\tilde{W}_n\cong Z_n$ for each $n$, where $Z_n$ is the pointwise cofiber as above. We therefore lose control over the levelwise point-set properties of $\tilde{W}$. In fact, by the Gluing Lemma, which holds in any left proper model category, 
a sufficient condition for the map of homotopy pushout diagrams 
\[ \xymatrix{
0 \ar[d]^{=} &  \ar[l] X_{\bullet} \ar[d]^{=}  \ar[r]& \tilde{Y}_{\bullet}  \ar[d]^{\simeq }  \\
0  &  \ar[l] X_{\bullet} \ar[r]& Y_{\bullet}  \\
}\]
to induce a weak equivalence of pushouts $W_{\bullet}\rightarrow Y_{\bullet}$, where $X_{\bullet}\rightarrow  \tilde{Y}_{\bullet}$ is a Reedy cofibration
is for $X_{\bullet} \rightarrow Y_{\bullet}$ to \emph{already} be a Reedy cofibration, which without the theorem, would not necessarily be the case (see for example \cite{MR1466099} where the satisfying the Gluing Lemma is shown to be equivalent to the left proper property of a model category). %Show the Reedy model structure on simplicial symmetric spectra in simplicial sets is left proper \cite{qx}. 
\end{comment}
%\bibliography{/home/asalch/texmf/tex/salch}{}
\bibliography{salch}{}
\bibliographystyle{plain}
\end{document}